\newfont{\BB}{msbm10 scaled\magstep1}
\newfont{\bb}{msbm8}
\def\R{\mbox{\BB R}}
\def\RR{\mbox{\bb R}}
\def\D{\mbox{\BB D}}
\def\P{\mbox{\BB P}}
\def\E{\mbox{\BB E}}
\begin{document}



\section{Introduction}Entropy concentration phenomena have been long studied, see. e.g. \cite{Ja}. In \cite{FLM}, concentration of information for convex measures was studied through the variance of the {\em information content}, namely $-log p(X)$ where $p$ is the density of the probability distribution of the random vector $X$. In the \cite{S}, the cutoff phenomenon was explicitly connected to an {\em entropy concentration principle} for non-negatively curved Markov chains. The central indicator of the entropy concentration property is  the (worst-case) {\em Varentropy}, namely the variance of a relative information content with respect to the invariant measure. 

We study here the evolution of varentropy in the simple case of a Markov diffusion process with constant diffusion coefficient. Using a reverse-time space-time harmonic property of the ratio of two solutions of the Fokker-Plank equation, we obtain an explicit formula for the derivative of the varentropy which involves  a nonlinear  function of the {\em local free energy} $\ln(p_t(x)/\bar{p}(x))$, $\bar{p}$ being the invariant density. We then show that the formula applied to the scalar, Gaussian case yields the correct result.

The paper is outlined as follows. In Section \ref{finite energy}, we recall some results by F\"{o}llmer dating back to the 1980's. In Section \ref{Markov} we recall some reverse-time properties of the ratio of two solutions of a Fokker-Planck equation. In Section \ref{varentropy}, we present our main result Theorem \ref{var} on the time derivative of the Varentropy. In the final section, we show that the formula obtained in Section \ref{varentropy} yields the correct result when applied to a scalar, Gaussian diffusion.

\section{Finite-energy diffusions} \label{finite energy}
Let ${\cal W}_x$ be Wiener measure starting at $x$ on $\Omega=C([0,T],\R^n)$. Let 
\[{\cal W}=\int {\cal W}_x dx
\]
be {\em stationary Wiener measure}. We denote by $\mathcal D$ the family of probability measures on $\Omega$ which are equivalent to stationary Wiener measure. Let $\mathcal F_t$ and $\mathcal G_t$ be the $\sigma$-algebras of events observable up to time $t$ and from time $t$ on, respectively. By Girsanov's theorem, any $P\in\mathcal D$ has a forward drift $\beta_t^P$ measurable with respect to $\mathcal F_t$   and a backward drift $\gamma_t^P$ measurable with respect to $\mathcal G_t$. That means that, under $P$, the coordiante process admits for $0\le s<t\le T$  the representations
\begin{eqnarray}X_t-X_s&=&\int_s^t\beta_\tau^Pd\tau+W_t-W_s,\\X_t-X_s&=&\int_s^t\gamma_\tau^Pd\tau+\bar{W}_t-\bar{W}_s.
\end{eqnarray}
Here $W$ and $\bar{W}$ are standard $n$-dimensional Wiener processes adapted to $(\mathcal F_t)$ and to $(\mathcal G_t)$, respectively. Moreover, the two drifts satisfy the  condition
\[P\left[\int_0^T|\beta^P_t|^2dt\right]= P\left[\int_0^T|\gamma^P_t|^2dt\right]=1,
\]
see e.g \cite[Theorem 7.11]{LS}. By the Girsanov theory, we have $P$ a.s.
\begin{eqnarray}\nonumber
\frac{dQ}{dP}(X)&=&\frac{q_0(X_0)}{p_0(X_0)}\exp\left[\int_0^T(\beta^Q_t-\beta^P_t)dX_t+\frac{1}{2}\left(\beta^P_t-\beta^Q_t\right)^2dt\right]\\&=&\frac{q_T(X_T)}{p_T(X_T)}\exp\left[\int_0^T(\gamma^Q_t-\gamma^P_t)dX_t+\frac{1}{2}\left(\gamma^P_t-\gamma^Q_t\right)^2dt\right].\label{Girsanov}
\end{eqnarray}
\begin{proposition}\label{martingale}Let $P, Q\in\mathcal D$ have one-time density $p_t$ and $q_t$, $t\in [0,T]$, respectively. If $\gamma^P_t=\gamma^Q_t$ for $t\in [0,T]$, $\frac{q_t}{p_t}(X_t)$ is a $(P,\mathcal F_t)$ martingale on $[0,T]$. If $\beta^P_t=\beta^Q_t$ for $t\in [0,T]$, $\frac{q_t}{p_t}(X_t)$ is a reverse-time $(P,\mathcal G_t)$ martingale on $[0,T]$.
\end{proposition}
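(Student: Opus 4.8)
The plan is to identify $\frac{q_t}{p_t}(X_t)$ with a suitable restriction of the Radon--Nikodym derivative $\frac{dQ}{dP}$ and then invoke the general principle that such restricted densities are (reverse-time) martingales. I would first recall this principle in the form needed here. For equivalent measures $P,Q$ and an increasing filtration $(\mathcal F_t)$, the density process $L_t:=\frac{dQ}{dP}|_{\mathcal F_t}$ is a $(P,\mathcal F_t)$ martingale: for $s<t$ and $A\in\mathcal F_s\subseteq\mathcal F_t$ one has $P[L_t\mathbf 1_A]=Q(A)=P[L_s\mathbf 1_A]$, whence $P[L_t\mid\mathcal F_s]=L_s$. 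For a \emph{decreasing} filtration $(\mathcal G_t)$ the same computation yields a reverse-time martingale: for $s<t$ and $A\in\mathcal G_t\subseteq\mathcal G_s$ we get $P[L_s\mathbf 1_A]=Q(A)=P[L_t\mathbf 1_A]$, i.e.\ $P[L_s\mid\mathcal G_t]=L_t$. Thus the whole statement reduces to computing these two restricted densities.

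The key step is to read $\frac{dQ}{dP}|_{\mathcal F_t}$ and $\frac{dQ}{dP}|_{\mathcal G_t}$ off the two representations in \eqref{Girsanov} by restricting to the relevant subinterval. Restricting to $[0,t]$ replaces the terminal time $T$ by $t$ in the backward representation (second line of \eqref{Girsanov}), so that the density ratio now appears at the right endpoint $t$; under the hypothesis $\gamma^P_\tau=\gamma^Q_\tau$ the stochastic exponential collapses to $1$ and we obtain $\frac{dQ}{dP}|_{\mathcal F_t}=\frac{q_t}{p_t}(X_t)$, which by the forward-martingale fact proves the first assertion. Symmetrically, restricting to $[t,T]$ replaces the initial time $0$ by $t$ in the forward representation (first line of \eqref{Girsanov}); under $\beta^P_\tau=\beta^Q_\tau$ the exponential is again $1$, giving $\frac{dQ}{dP}|_{\mathcal G_t}=\frac{q_t}{p_t}(X_t)$, which by the reverse-time-martingale fact is a reverse-time $(P,\mathcal G_t)$ martingale. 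Note that $\frac{q_t}{p_t}(X_t)$ is measurable with respect to both $\mathcal F_t$ and $\mathcal G_t$, as required.

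The main obstacle is making the restriction step rigorous inside the finite-energy framework of Section~\ref{finite energy}. One must justify that the $\mathcal F_t$-marginal (resp.\ $\mathcal G_t$-marginal) of $Q$ relative to $P$ is indeed obtained by truncating the Girsanov cocycle at time $t$ and multiplying by the one-time density ratio at $t$; this factorization is exactly where the Markov property enters, through the classical relation $\beta^P-\gamma^P=\nabla\ln p_t$, $\beta^Q-\gamma^Q=\nabla\ln q_t$ between forward and backward drifts, and one must also check that the exponential local martingales appearing in \eqref{Girsanov} are genuine martingales, which is guaranteed by the finite-energy conditions on the drifts. As an independent check I would verify the first assertion directly by It\^o calculus: writing $h_t=\ln\frac{q_t}{p_t}$, the hypothesis $\gamma^P=\gamma^Q$ forces $\beta^Q-\beta^P=\nabla h_t$, and substituting the Fokker--Planck equations for $p_t$ and $q_t$ shows that the drift of $\ln\frac{q_t}{p_t}(X_t)$ under $P$ coincides with that of the forward Girsanov exponential, so the two processes agree and $\frac{q_t}{p_t}(X_t)$ is the announced martingale.
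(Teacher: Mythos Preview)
Your approach is essentially the paper's: both identify $\frac{q_t}{p_t}(X_t)$ with the restricted Radon--Nikodym derivative $\frac{dQ_{0t}}{dP_{0t}}=\E_P\!\left[\frac{dQ}{dP}\mid\mathcal F_t\right]$ by reading the backward line of \eqref{Girsanov} on $[0,t]$ (where $\gamma^P=\gamma^Q$ kills the exponential), and then invoke the general fact that this density process is a $(P,\mathcal F_t)$ martingale. Your third-paragraph worry that the Markov property is needed for the restriction step is unnecessary---\eqref{Girsanov} is stated for arbitrary $P,Q\in\mathcal D$ and holds on any subinterval, and the paper's proof makes no Markov assumption.
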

\begin{proof} Suppose $\gamma^P_t=\gamma^Q_t$ for $t\in [0,T]$. By (\ref{Girsanov}), under $P$,
\[\E\left\{\frac{dQ}{dP}| \mathcal F_t\right\}=\frac{dQ_{0t}}{dP_{0t}}=\frac{q_t}{p_t}(X_t).
\]
Being the conditional expectation of a fixed random variable with respect to an increasing family of $\sigma$-algebras, $\frac{q_t}{p_t}(X_t)$ is a supermantingale. Since it has constant expectation, it is actually a martingale. The other half is proven similarly.
\end{proof}

\section{Markov diffusion processes}\label{Markov}
Let $\{X_t; t\ge 0\}$ be a $\R^n$-valued Markov diffusion process defined on a probability space $(\Omega,\cal F,\P)$ with forward It$\hat{o}$ differential
\begin{equation}\label{FD}dX_t=b_+(t,X_t)dt+ \sigma dW_t,
\end{equation}
where $W$ is a standard $n$-dimensional Wiener process. Under rather weak assumptions \cite{N1,HP}, $X$ possesses on $t>0$ also a reverse-time differential
\begin{equation}\label{RD}dX_t=b_-(t,X_t)dt+ \sigma d\bar{W}_t.
\end{equation}
Here $\bar{W}$ is another $n$-dimensional standard Wiener process with $\bar{W}_s-\bar{W}_t$ independent of $X_\tau$ whenever $s<t\le \tau$. Here and in the sequel $dt>0$ so that, if $X$ is a finite-energy diffusion, i.e.  
\[\E\left\{\int_{t_0}^{t_1}\|b_+\|^2\right\}<\infty,
\]
we get the two drifts as Nelson's conditional derivatives, namely
\begin{eqnarray}\nonumber b_+(t,X_t)&=&\lim_{dt\searrow 0}E\left\{\frac{X_{t+dt}-X_t}{dt}|X_\tau, t_0\le \tau\le t\right\},\\\nonumber b_-(t,X_t)&=&\lim_{dt\searrow 0}E\left\{\frac{X_{t}-X_{t-dt}}{dt}|X_\tau, t\le \tau\le t_1\right\},
\end{eqnarray}
the limits being in $L^2_n(\Omega,\cal f,\P)$ and $dt>0$ for both formulas. Moreover, we have Nelson's duality formula
\begin{equation}\label{DF}b_-(t,X_t)=b_+(t,X_t)-\sigma^2\nabla\ln p_t(X_t) \:{\rm a.s.}
\end{equation}
where $p_t$ is the probability density of $X_t$ satisfying (at least weakly) the Fokker-Planck equation
\begin{equation}\label{FP}\frac{\partial p}{\partial t}+\nabla\cdot(b_+ p)-\frac{\sigma^2}{2}\Delta p=0.
\end{equation}
From (\ref{DF}), we also have
\begin{equation}\label{RFP}\frac{\partial p}{\partial t}+\nabla\cdot(b_- p)+\frac{\sigma^2}{2}\Delta p=0.
\end{equation}
There are two change of variables formulae related to (\ref{FD}) and (\ref{RD}).
Let $f:\R^n\times
[t_0,t_1] \rightarrow \R$ be twice continuously differentiable with respect to the spatial variable
and once with respect to time.
Then: 
\begin{eqnarray}
f(X_t,t)-f(X_s,s)=\int_s^t\left(\frac{\partial}{\partial
\tau}+b_+(\tau,X_{\tau})\cdot\nabla+\frac{\sigma^2}{2}\Delta\right)f(X_{\tau},\tau
)d\tau\\+\int_s^t\sigma\nabla f(X_\tau,\tau)\cdot
dW_\tau,\label{K5}\\f(X_t,t)-f(X_s,s)=
\int_s^t\left(\frac{\partial}{\partial
\tau}+b_-(\tau,X_{\tau})\tau)\cdot\nabla-\frac{\sigma^2}{2}\Delta\right)f(X_{\tau},\tau)d\tau
\\+\int_s^t\sigma\nabla f(X_{\tau})\cdot d\bar{W}_\tau. \label{K6} \end{eqnarray} The
stochastic integrals appearing in (\ref{K5}) and (\ref{K6}) are a (forward) Ito integral and a
backward Ito integral, respectively, see \cite {N3} for the details. Suppose that $b_+(t,x)=b_+(x)$ is such that (\ref{FP}) has $\bar{p}$ as invariant solution. Then, a direct calculation \cite{P}
shows that $\bar{p}/p_t$ is reverse-time space time harmonic, namely
\begin{equation}\label{RSTH}\left(\frac{\partial}{\partial
\tau}+b_-(\tau,x)\cdot\nabla-\frac{\sigma^2}{2}\Delta\right)\left(\frac{\bar{p}}{p_t}\right)=0.
\end{equation}
It follows that $M_t:=\frac{\bar{p}}{p_t}(X_t)$ is a reverse-time local martingale with respect to the decreasing family ${\cal F}_t^+(X)=\sigma\left\{X_\tau; \tau\ge t\right\}$. This follows also from Proposition \ref{martingale} for diffusions with diffusion coefficient $\sigma^2$. In view of (\ref{K6}), the reverse-time Ito differential of $M_t$ is 
\begin{equation}\label{DM} dM_t=\sigma\nabla\left(\frac{\bar{p}}{p_t}\right)(X_t)d\bar{W}_t.
\end{equation}

\section{Varentropy}\label{varentropy}

Define $-\ln p_t(x)$ as the {\em local entropy}. We mention that $-\log p_t$, where $p_t$ satisfies a Fokker-Planck equation, may be viewed as the value function of a stochastic control problem. In particular, in \cite[p.194]{P}, $-\log p(x,t)$ was named {\em local entropy} and various of its properties were established. The corresponding optimal control is  related to the so-called {\em score-function} $\nabla\log p(x,t)$ of generative models of machine learning based on flows.  Local entropy was recently rediscovered  in connection with an attempt to smooth the {\em energy landscape} of deep neural networks, see \cite{P2} and references therein. 

Let $\psi(t,x)=\ln(p_t/\bar{p})$ be the {\em local free energy} as in \cite{P}. Then 
\[\E\{\psi(t,X_t)\}=\D(p_t\|\bar{p})=\int_{\RR^n}\ln\frac{p_t}{\bar{p}}(x)p_t(x)dx,
\]
the {\em relative entropy} or {\em divergence} or {\em Kullback-Leibler index} between $p_t$ and $\bar{p}$.
Moreover, the free energy decays according to \cite{Gr}
\begin{equation}\label{FED}
\frac{d}{dt}\E\{\psi(t,X_t)\}=\frac{d}{dt}\D(p_t\|\bar{p})=-\frac{\sigma^2}{2}\int_{\RR^n}\|\nabla\ln\frac{p_t}{\bar{p}}(x)\|^2p_t(x)dx.
\end{equation}
Define the {\em Varentropy} ${\cal V}_{KL}(p_t\|\bar{p})$ as in \cite{S} 
\begin{eqnarray}\nonumber{\cal V}_{KL}(p_t\|\bar{p})&=&\int_{\RR^n}\left(\ln\frac{p_t(x)}{\bar{p}(x)}-\D(p_t\|\bar{p})\right)^2p_t(x)dx\\\nonumber&=&\E\left[\left(\psi(t,X_t)-\E(\psi(t,X_t)\right)^2\right]=\E\left\{\psi(t,X_t)^2\right\}-\E\{\psi(t,X_t)\}^2.
\end{eqnarray}
Hence, in view of (\ref{FED}), 
\begin{eqnarray}\nonumber\frac{d}{dt}{\cal V}_{KL}(p_t\|\bar{p})&=&\frac{d}{dt}\E\left\{\psi(t,X_t)^2\right\}-\frac{d}{dt}\left[\D(p_t\|\bar{p})^2\right]\\&=&\frac{d}{dt}\E\left\{\psi(t,X_t)^2\right\}+\D(p_t\|\bar{p})\sigma^2\int_{\RR^n}\|\nabla\ln\frac{p_t}{\bar{p}}(x)\|^2p_t(x)dx.\label{firstderVar}
\end{eqnarray}
To understand the rate of change of  the Varentropy, we need therefore to study 
\[\frac{d}{dt}\E\left\{\psi(t,X_t)^2\right\}=\frac{d}{dt}\E\left\{(-\ln M_t)^2\right\},
\]
where we recall that $M_t:=\frac{\bar{p}}{p_t}(X_t)$. 

\begin{theorem} \label{var}Let $\psi(t,x)=\ln(p_t/\bar{p})$ .The rate of change of the Varentropy is given by
\begin{equation}\frac{d}{dt}{\cal V}_{KL}(p_t\|\bar{p})=\sigma^2\E\left\{\left[-\psi(t,X_t)-1+\E\{\psi(t,X_t)\}\right]\|\nabla\psi(t,X_t)\|^2\right\}.\label{var_increase_decay}
\end{equation}
\end{theorem}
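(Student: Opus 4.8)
The plan is to reduce everything to the single quantity $\frac{d}{dt}\E\{\psi(t,X_t)^2\}$, exactly as isolated in (\ref{firstderVar}), and to compute it using the reverse-time machinery of Section \ref{Markov} rather than differentiating the density integral by hand. First I would apply the reverse-time change of variables formula (\ref{K6}) to the function $f(x,\tau)=\psi(\tau,x)^2$. Taking expectations, the backward It\^o integral against $d\bar W$ has zero mean, so that
\[
\frac{d}{dt}\E\{\psi(t,X_t)^2\}=\E\left\{\left(\frac{\partial}{\partial t}+b_-\cdot\nabla-\frac{\sigma^2}{2}\Delta\right)\psi(t,X_t)^2\right\}.
\]
Writing $\mathcal A:=\frac{\partial}{\partial t}+b_-\cdot\nabla-\frac{\sigma^2}{2}\Delta$ for the reverse-time generator, a direct expansion (the carr\'e-du-champ identity for $\mathcal A$, whose second-order part is $-\frac{\sigma^2}{2}\Delta$) gives
\[
\mathcal A(\psi^2)=2\psi\,\mathcal A\psi-\sigma^2\|\nabla\psi\|^2.
\]

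The key step is to evaluate $\mathcal A\psi$, and here the reverse-time space-time harmonic property (\ref{RSTH}) does the work. Since $\bar p/p_t=e^{-\psi}$ and (\ref{RSTH}) asserts $\mathcal A(e^{-\psi})=0$, the elementary expansion $\mathcal A(e^{-\psi})=-e^{-\psi}\big(\mathcal A\psi+\frac{\sigma^2}{2}\|\nabla\psi\|^2\big)$ together with $e^{-\psi}\neq 0$ yields the clean identity
\[
\mathcal A\psi=-\frac{\sigma^2}{2}\|\nabla\psi\|^2 .
\]
As a consistency check, $\E\{\mathcal A\psi\}=-\frac{\sigma^2}{2}\E\{\|\nabla\psi\|^2\}$, which reproduces the free-energy decay (\ref{FED}). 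Substituting this into the carr\'e-du-champ identity gives $\mathcal A(\psi^2)=-\sigma^2(\psi+1)\|\nabla\psi\|^2$, whence
\[
\frac{d}{dt}\E\{\psi(t,X_t)^2\}=-\sigma^2\,\E\big\{(\psi(t,X_t)+1)\|\nabla\psi(t,X_t)\|^2\big\}.
\]

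It then remains only to insert this into (\ref{firstderVar}), noting that the residual term there equals $\sigma^2\D(p_t\|\bar p)\,\E\{\|\nabla\psi\|^2\}=\sigma^2\E\{\psi\}\,\E\{\|\nabla\psi\|^2\}$; collecting the contributions under a single expectation produces exactly (\ref{var_increase_decay}). The main obstacle is analytic rather than algebraic: one must justify that the backward stochastic integral in (\ref{K6}) is a genuine mean-zero martingale and that differentiation may be passed under the expectation, which requires the finite-energy/integrability hypotheses of Section \ref{finite energy} together with enough decay of $p_t$ to control the moments $\E\{|\psi|\,\|\nabla\psi\|^2\}$. A fully forward alternative avoids (\ref{RSTH}) altogether: one computes $\big(\frac{\partial}{\partial t}+b_+\cdot\nabla+\frac{\sigma^2}{2}\Delta\big)\psi$ directly from the Fokker-Planck equation (\ref{FP}) and the invariance relation $\nabla\cdot(b_+\bar p)=\frac{\sigma^2}{2}\Delta\bar p$, then integrates by parts, whereupon all terms involving $\nabla\ln\bar p$ cancel and the same formula emerges; the bookkeeping is heavier, which is precisely why the reverse-time harmonic identity is the natural tool here.
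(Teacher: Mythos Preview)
Your proposal is correct and follows essentially the same route as the paper: both compute $\frac{d}{dt}\E\{\psi(t,X_t)^2\}$ via the reverse-time It\^o formula together with the space-time harmonic property (\ref{RSTH}) of $\bar p/p_t$, and then insert the result into (\ref{firstderVar}). The only cosmetic difference is that the paper applies It\^o's formula to $f(M_t)$ with $f(x)=(\ln x)^2$ and the reverse-time martingale $M_t=\bar p/p_t(X_t)$, so that the drift reduces to the quadratic-variation term $-\tfrac12 f''(M_t)\,d\langle M\rangle_t$, whereas you work with the reverse-time generator $\mathcal A$ on $\psi^2$ and extract $\mathcal A\psi=-\tfrac{\sigma^2}{2}\|\nabla\psi\|^2$ from $\mathcal A(e^{-\psi})=0$; the two computations are algebraically equivalent.
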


\begin{proof}
We first compute the reverse-time differential of $N_t = (-\ln M_t)^2=(\ln M_t)^2=\left[\ln\frac{p_t}{\bar{p}}(X_t)\right]^2=\psi(t,X_t)^2$. Note that $f(x)=(\ln x)^2$ is not convex nor concave on $x>0$ and we have
\[f'(x)=2\ln x\cdot\frac{1}{x}, \quad f''(x)=\frac{2(1-\ln x)}{x^2}.
\] 
By (\ref{DM}) and the generalization of (\ref{K6}), we get
\begin{eqnarray}\nonumber dN_t&=&\frac{df}{dx}(M_t)dM_t-\frac{1}{2}\frac{d^2f}{dx^2}(M_t)d\langle M\rangle_t=-\frac{1-\ln M_t}{M_t^2}\sigma^2\left\|\nabla\frac{p_t}{\bar{p}}\right\|^2dt+2\ln M_t\frac{1}{M_t}\nabla \frac{\bar{p}}{p_t}\sigma d\bar{W}_t\\&=&\sigma^2\left(-\ln \frac{p_t}{\bar{p}}(X_t)-1\right)\|\nabla\ln\frac{p_t}{\bar{p}}(X_t)\|^2dt+2\ln \frac{p_t}{\bar{p}}(X_t)\nabla\ln\frac{p_t}{\bar{p}}(X_t)\sigma d\bar{W}_t.\label{RDN}
\end{eqnarray}
By a locatization argument, the stochastic integral
\[\int_{t_0}^{t_1}2\ln \frac{p_t}{\bar{p}}(X_t)\nabla\ln\frac{p_t}{\bar{p}}(X_t)\sigma d\bar{W}_t
\]
has zero expectation. Thus, in view of (\ref{firstderVar}) and (\ref{RDN}), we get
\begin{eqnarray}\nonumber \frac{d}{dt}{\cal V}_{KL}(p_t\|\bar{p})&=& \frac{d}{dt}\E\left[\left(\psi(t,X_t)-\E(\psi(t,X_t)\right)^2\right]\nonumber\\\nonumber&=&\frac{d}{dt}\E\left\{N_t\right\}+\D(p_t\|\bar{p})\sigma^2\int_{\RR^n}\|\nabla\ln\frac{p_t}{\bar{p}}(x)\|^2p_t(x)dx\\&=&\sigma^2\E\left\{\left(-\ln\frac{p_t}{\bar{p}}(X_t)-1\right)\|\nabla\ln\frac{p_t}{\bar{p}}(X_t)\|^2\right\}+\D(p_t\|\bar{p})\sigma^2\int_{\RR^n}\|\nabla\ln\frac{p_t}{\bar{p}}(x)\|^2p_t(x)dx\nonumber\\&=&\sigma^2\E\left\{\left(-\ln\frac{p_t}{\bar{p}}(t,X_t)-1+\D(p_t\|\bar{p})\right)\|\nabla\ln\frac{p_t}{\bar{p}}(X_t)\|^2\right\}\nonumber\\&=&\sigma^2\E\left\{\left[-\psi(t,X_t)-1+\E\{\psi(t,X_t)\}\right]\|\nabla\psi(t,X_t)\|^2\right\}.\label{var_increase_decay}
\end{eqnarray}
\end{proof}

\section{The scalar Gaussian case}\label{Gaussian}
Consider the case of a scalar, zero-mean, Gaussian diffusion $\{X_t; t\ge 0\}$ with forward It$\hat{o}$ differential
\[dX_t=-\frac{1}{2} X_tdt +dW_t
\]
and initial condition $X_0$ distributed according to
\[p_0(x)=\frac{1}{\sigma_0\sqrt{2\pi}}\exp\left[-\frac{x^2}{\sigma_0^2}\right].
\]
In this simple case, the time derivative of Varentropy can be computed explicitly.
The variance $\sigma_t^2$ of $X_t$ satisfies
\[\frac{d}{dt}(\sigma_t^2)=-\sigma_t^2+1, \quad \lim_{t\rightarrow\infty}\sigma^2_t=\bar{\sigma}^2=1.
\]
Thus the invariant density is simply the standard Normal
\[\bar{p}(x)=\frac{1}{\sqrt{2\pi}}\exp\left[-x^2\right].
\]
Then
\[\psi(t,x)=\ln(p_t/\bar{p})(x)=\ln\left\{\frac{1}{\sigma_t}\exp\left[-\frac{x^2}{2}\left(-1+\frac{1}{\sigma_t^2}\right)\right]\right\}=-\ln\sigma_t-\frac{x^2}{2}\left(\frac{1-\sigma_t^2}{\sigma_t^2}\right)
\]
It follows that
\[\E\{\psi(t,X_t)\}=-\ln\sigma_t-\frac{1}{2}\left(1-\sigma_t^2\right).
\]
Observe that
\[\dot{\sigma}_t=\frac{d}{dt}\left(\sqrt{\sigma_t^2}\right)=\frac{1}{2}\frac{1}{\sqrt{\sigma_t^2}}\frac{d}{dt}\left(\sigma^2_t\right)=\frac{1}{2\sigma_t}\left(1-\sigma_t^2\right).
\]
Then
\[\frac{d}{dt}\E\{\psi(t,X_t)\}=-\frac{\dot{\sigma}_t}{\sigma_t}+\frac{1}{2}\frac{d}{dt}\left(\sigma_t^2\right)=-\frac{1}{2\sigma_t^2}\left(1-\sigma_t^2\right)+\frac{1}{2}\left(1-\sigma_t^2\right)=-\frac{1}{2}\frac{\left(1-\sigma_t^2\right)^2}{\sigma_t^2}.
\]
Recal that the Varentropy  is
\[{\cal V}_{KL}(p_t\|\bar{p})=\E\left\{\psi(t,X_t)^2\right\}-\E\{\psi(t,X_t)\}^2.
\]
We compute first $\E\left\{\psi(t,X_t)^2\right\}$. We have
\[\psi(x,t)^2=\left[-\ln\sigma_t-\frac{x^2}{2}\left(\frac{1-\sigma_t^2}{\sigma_t^2}\right)\right]^2=\left(\ln \sigma_t\right)^2+\ln\sigma_t x^2\left(\frac{1-\sigma_t^2}{\sigma_t^2}\right)+\frac{x^4}{4}\left(\frac{1-\sigma_t^2}{\sigma_t^2}\right)^2.
\]
As $X_t$  ia a centered Gaussian $\E\{X_t^4\}=3\sigma_t^4$. Hence, we get
\[\E\left\{\psi(x,t)^2\right\}=\left(\ln \sigma_t\right)^2+\ln\sigma_t \left(1-\sigma_t^2\right)+\frac{3}{4}\left(1-\sigma_t^2\right)^2.
\]
Thus,
\begin{eqnarray}\nonumber{\cal V}_{KL}(p_t\|\bar{p})&=&\left(\ln \sigma_t\right)^2+\ln\sigma_t \left(1-\sigma_t^2\right)+\frac{3}{4}\left(1-\sigma_t^2\right)^2-\left[\ln\sigma_t+\frac{1}{2}\left(1-\sigma_t^2\right)\right]^2\\&=&\left(\ln \sigma_t\right)^2+\ln\sigma_t \left(1-\sigma_t^2\right) +\frac{3}{4}\left(1-\sigma_t^2\right)^2-\left(\ln \sigma_t\right)^2-\ln\sigma_t \left(1-\sigma_t^2\right)-\frac{1}{4}\left(1-\sigma_t^2\right)^2\nonumber\\&=&\frac{1}{2}\left(1-\sigma_t^2\right)^2.\nonumber
\end{eqnarray}
It follows that
\begin{equation}\label{varentropydecay}\frac{d}{dt}{\cal V}_{KL}(p_t\|\bar{p})=-\left(1-\sigma_t^2\right)\frac{d}{dt}\left(\sigma_t^2\right)=-\left(1-\sigma_t^2\right)^2=-2{\cal V}_{KL}(p_t\|\bar{p}).
\end{equation}
Hence, Varentropy is exponentially decaying according to
\[{\cal V}_{KL}(p_t\|\bar{p})={\cal V}_{KL}(p_0\|\bar{p})e^{-2t}=\frac{1}{2}\left(1-\sigma_0^2\right)^2e^{-2t}.
\]
We  now try to compute the Varentropy time derivative using formula (\ref{var_increase_decay})
\[\frac{d}{dt}{\cal V}_{KL}(p_t\|\bar{p})=\E\left\{\left[-\psi(t,X_t)-1+\E\{\psi(t,X_t)\}\right]\left\|\frac{\partial \psi(t,X_t)}{\partial x}\right\|^2\right\}.
\]
\begin{eqnarray}\nonumber\frac{d}{dt}{\cal V}_{KL}(p_t\|\bar{p})&=&\E\left\{\left[\ln\sigma_t+\frac{X_t^2}{2}\left(\frac{1-\sigma_t^2}{\sigma_t^2}\right)-1-\ln\sigma_t-\frac{1}{2}\left(1-\sigma_t^2\right)\right]\cdot\left[X_t^2\left(\frac{1-\sigma_t^2}{\sigma_t^2}\right)^2\right]\right\}\\\nonumber&=&\E\left\{\frac{X_t^4}{2}\left(\frac{1-\sigma_t^2}{\sigma_t^2}\right)^3\right\}-\E\left\{X_t^2\left(\frac{1-\sigma_t^2}{\sigma_t^2}\right)^2\right\}-\frac{1}{2}\E\left\{X_t^2\left(1-\sigma_t^2\right)\left(\frac{1-\sigma_t^2}{\sigma_t^2}\right)^2\right\}\\\nonumber&=&\frac{3\sigma^4_t}{2}\left(\frac{1-\sigma_t^2}{\sigma_t^2}\right)^3-\frac{\left(1-\sigma_t^2\right)^2}{\sigma_t^2}-\frac{1}{2}\frac{\left(1-\sigma_t^2\right)^3}{\sigma_t^2}\\\nonumber&=&\frac{3}{2\sigma_t^2} \left(1-\sigma_t^2\right)^3-\frac{\left(1-\sigma_t^2\right)^2}{\sigma_t^2}-\frac{1}{2}\frac{\left(1-\sigma_t^2\right)^3}{\sigma_t^2}\\\nonumber&=&\frac{\left(1-\sigma_t^2\right)^3}{\sigma_t^2}-\frac{\left(1-\sigma_t^2\right)^2}{\sigma_t^2}\\\nonumber&=&-\left(1-\sigma_t^2\right)^2
\end{eqnarray}
which coincides with (\ref{varentropydecay}).
\section{Closing comments}
Further study, possibly based on functional inequalities, is needed to establish if (\ref{var_increase_decay}) implies that Varentropy ${\cal V}_{KL}(p_t\|\bar{p})$, in analogy to relative entropy $\D(p_t\|\bar{p})$, is always decreasing for Markov diffusions like in the Gaussian case. If not, there could be cases where Varentropy  attains  a maximum at some time $t_M$ after which it decreases to zero in analogy to what is observed in some Markov chains models \cite{S}.

\end{document}